\theoremstyle{plain}
\newtheorem{mainthm}{Theorem}
\newtheorem{theorem}{Theorem}[section]
\newtheorem{lemma}[theorem]{Lemma}
\newtheorem{proposition}[theorem]{Proposition}
\newtheorem{corollary}[theorem]{Corollary}
\newtheorem{definition}[theorem]{Definition}
\theoremstyle{definition}
\newtheorem{remark}[theorem]{Remark}
\newtheorem{example}[theorem]{Example}
\newtheorem{question}[theorem]{Question}
\newtheorem{notation}[theorem]{Notation}
\numberwithin{equation}{section}
\let\oldmarginpar\marginpar
\renewcommand\marginpar[1]{\-\oldmarginpar[\raggedleft\footnotesize \textcolor{red}{#1}]{\raggedright\footnotesize\textcolor{red}{#1}}}
\begin{document}
\title[Powers of the edge ideals and matchings in hypergraphs]{Powers of the edge ideals and matchings in hypergraphs}

\author[F. Khosh-Ahang]{Fahimeh Khosh-Ahang Ghasr}
\address{{Department of Mathematics, School of Science, Ilam University,
P.O.Box 69315-516, Ilam, Iran.}}
\email{f.khoshahang@ilam.ac.ir and fahime$_{-}$khosh@yahoo.com}

\begin{abstract}
In this work, some combinatorial lower bound for regularity of powers of the edge ideal of a uniform hypergarph is gained. A family of hypergraphs whose regularity of edge ideal attains this bound and has a significant difference from the lower bounds heretofore obtained have also been introduced.
\end{abstract}

\subjclass[2010]{Primary 13F20, 05E40; Secondary 05C65.}


\keywords{edge ideal of hypergraph, graded Betti numbers, matching numbers, power of a monomial ideal, regularity.}

\maketitle
\setcounter{tocdepth}{1}

\section{Introduction}
In what follows  $t,d\in \mathbb{N}$, $d\geq 2$, $R=K[x_1, \dots , x_n]$ is the polynomial ring over the field $K$ and $S_1, \dots , S_m$ are monomials in $R$. For our convenience, sometimes we use the same notation $S_k$ for both of the monomial $S_k$ and $\{x_i \ :\  x_i \mid S_k\}$. By identifying the vertex $x_i$ and the variable $x_i$,  $\mathcal{H}$ stands for a simple hypergraph with $V(\mathcal{H})=\{x_1, \dots , x_n\}$ and $\mathcal{E}(\mathcal{H})=\{S_1, \dots , S_m\}$. Recall a hypergraph is $d$-\textbf{uniform} if all of its edges have the same cardinality $d$ and
$$I=\langle S_k \ : \ 1\leq k \leq m \rangle ,$$
is the edge ideal of $\mathcal{H}$.
So there is a correspondence between $d$-uniform hypergraphs and square-free monomial ideals in degree $d$ (see \cite{Villarreal}).

For a minimal graded free resolution
$$\cdots \longrightarrow \oplus_jR(-j)^{\beta_{i,j}} \longrightarrow \cdots \longrightarrow \oplus_jR(-j)^{\beta_{1,j}} \longrightarrow R \longrightarrow R/I \longrightarrow 0,$$
 of $R/I$, $\beta_{i,j}(R/I)$ is called the $(i,j)$th \textbf{graded Betti number} of $R/I$. Also, recall that the \textbf{regularity} of  $R/I$ is defined as
$$\mathrm{reg}(R/I) = \max\{j-i \ : \ \beta_{i,j}(R/I)\neq 0\}.$$

In the last few decades, studying the regularity of square-free monomial ideals because of their connections to other fields such as combinatorics, algebraic topology and computational algebra, has been extensively expanded. In particular, many researchers in combinatorial commutative algebra are interested in computing or bounding the regularity of the edge ideal of graphs and hypergraphs and their powers (see \cite{BBH} and \cite{Ha} for some surveys in this context).

In this paper, we firstly investigate free resolutions of powers of a square-free monomial ideal supported on a simplicial complex. Then we use the gained results in Section 2 for a simplicial complex introduced in \cite{Faridi2} for powers of the edge ideal of a hypergraph. These yield to characterize or bound the graded Betti numbers and regularity of $R/I^t$ in Section 3.
For instance in Corollary \ref{Cor3.7} a combinatorial characterization is gained for vanishing of $\beta_{i,2di}(R/I^2)$ in terms of the number of matchings of size $i$ in a $d$-uniform hypergraph $\mathcal{H}$. The needed definitions of some kinds of matchings are presented in Definition \ref{maindef}. Moreover the main result of this note is the following result:

\begin{mainthm}\label{3.7}
\begin{enumerate}
\item Suppose that $\mathcal{H}$ is a hypergraph which has a self semi-induced matching, say $\mathcal{S}=\{S_1, \dots, S_i\}$, of type $(i,j)$. Then  for each $\ell$ with $1\leq \ell \leq i$ we have $\beta_{i,|S_\ell|(t-1)+j}(R/I^t)\neq 0$ and so
$$\mathrm{reg}(R/I^t) \geq |S_\ell|(t-1)+(|\bigcup_{1\leq k \leq i}S_k|-i).$$

\item   Let $s$ be the number of all self semi-induced matchings in $\mathcal{H}$ of type $(i,j)$. Then
$$\beta_{i,j}(R/I)\geq s,$$
and if moreover $\mathcal{H}$ is $d$-uniform, then for all integers $t>1$
$$\beta_{i,d(t-1)+j}(R/I^t)\geq si.$$

\item Suppose that $\mathcal{H}$ is a $d$-uniform hypergraph. Then for all $t\in \mathbb{N}$
$$d(t-1)+(d-1)(i.m)_{\mathcal{H}}\leq d(t-1)+(s.s.i.m)'_{\mathcal{H}}\leq \mathrm{reg}(R/I^t).$$

\end{enumerate}
\end{mainthm}
 The above result is indeed a generalization of some parts of \cite[Theorem 6.5]{Ha+Vantuyl},  \cite[Lemma 2.2 and Proposition 2.5]{Katzman},  \cite[Theorems 3.5(1), 3.7(1) and Corollary 3.6]{SF} for arbitrary powers of the edge ideal of a hypergraph. Also, it gives a lower bound better than \cite[Theorem 3.7(1) and Corollary 3.8(1)]{BCKMS} and \cite[Corollary 3.9]{M+V} for the regularity of powers of the edge ideals of uniform hypergraphs, since there are examples of $d$-uniform hypergraphs achieving our bound such that $(s.s.i.m)'_{\mathcal{H}}$ is reasonably larger that $(d-1)(i.m)_{\mathcal{H}}$ as presented in Example \ref{exam2}.

\section{Preliminaries}
In this section, for each $1\leq k \leq m$,
 $$S_k=\prod_{1\leq i \leq n} x_i^{a_{i,k}}, \textbf{a}_k=(a_{1,k}, \dots , a_{n,k})\in \{0,1\}^n, \sum_{1\leq i \leq n} a_{i,k}=d .$$
 Hence $I$ is a square-free monomial ideal of $R$ generated in degree $d$. Now assume that
$$\mathcal{B}_{m,t}=\{\textbf{b} \ : \ \textbf{b}=b_1\textbf{e}_1+\dots +b_m\textbf{e}_m=(b_1, \dots , b_m)\in \{0, \dots , t\}^m, \sum_{1\leq \ell\leq m}b_{\ell}=t \}.$$
Hereafter we order the elements of $\mathcal{B}_{m,t}$ in such a way that if $\textbf{b}=(b_1, \dots , b_m)$ and \linebreak $\textbf{b}'=(b'_1, \dots , b'_m)$ are two elements of $\mathcal{B}_{m,t}$, then $\textbf{b}<\textbf{b}'$ if and only if there exists an integer $1\leq k \leq m$ such that for all $1\leq \ell \leq k-1$ we have $b_\ell=b'_\ell$ and $b_k>b'_k$.
If we set $S^{\textbf{b}}=S_1^{b_1}\dots S_m^{b_m}$ for all $\textbf{b}=(b_1, \dots , b_m)\in \mathcal{B}_{m,t}$, then
$I^t=\langle S^\textbf{b} \ : \ \textbf{b}\in \mathcal{B}_{m,t} \rangle$.
But note that $\{ S^\textbf{b} \ : \ \textbf{b}\in \mathcal{B}_{m,t} \}$ is not necessarily a minimal set of generators of $I^t$.
So, $I^t$ is minimally generated by $p_{m,t}$ elements, when $p_{m,t}\leq |\mathcal{B}_{m,t}|= \left( \begin{array}{c}
t+m-1 \\
m-1
\end{array} \right)$.
In this section we are going to study some simplicial resolutions of $R/I^t$. To this aim we first need some notation.

\begin{notation}\label{Notation}
For given $I$ and $t$ with described notions, suppose that $A$ is the $n\times m$ matrix with columns $\textbf{a}_1, \dots, \textbf{a}_m$, and $B$ is the $m\times p_{m,t}$ matrix with columns $\textbf{b}_1, \dots, \textbf{b}_{p_{m,t}}$. Also for all $\ell_1, \dots, \ell_i\in \{1,\dots, p_{m,t}\}$,
$B[\ell_1, \dots , \ell_i]$ is the $m\times i$ submatrix of $B$ with columns $\textbf{b}_{\ell_1}, \dots, \textbf{b}_{\ell_i}$ and $B[\ell_1, \dots ,\widehat{\ell_k},\dots, \ell_i]$ is the $m\times (i-1)$ submatrix of $B$ with columns \linebreak $\textbf{b}_{\ell_1}, \dots, \textbf{b}_{\ell_{k-1}},\textbf{b}_{\ell_{k+1}}, \dots,  \textbf{b}_{\ell_i}$ (of course after ordering $\ell_1, \dots , \ell_i$). Moreover for each real matrix $C=(c_{i,j})_{p\times q}$, we assign the vector
$$\mathrm{Max}(C)=\left(
\max\{c_{11}, \dots, c_{1q}\}, \dots, \max\{c_{p1}, \dots, c_{pq}\} \right)  .$$
Also recall that for each real vector $\textbf{c}=(c_1, \dots , c_n)$, the sum of $\textbf{c}$ is  $\mathrm{Sum}(\textbf{c})=c_1+\dots +c_n$.
\end{notation}

Suppose that $\Delta$ is a simplicial complex whose vertices are labelled by the minimal monomial generators of $I$ and whose faces are labelled by the least common multiple of the vertices of that face.  Then we say that  $\Delta$ \textbf{supports a free resolution} $F_\bullet$ of $R/I$ or $F_\bullet$ \textbf{is supported on} $\Delta$ if the labelling of the vertices of $\Delta$ satisfies certain properties and the simplicial chain complex of $\Delta$ can be homogenized using the monomial labels on the faces.
A well-known simplicial complex which supports a free resolution of $R/I$ is Taylor complex of $I$ which is a simplex on $m$ vertices labelled by the minimal monomial generators of $I$ and is denoted  by $\mathrm{Taylor}(I)$.

In view of \cite{B+P+S}, if $\Delta$ supports a free resolution $F_\bullet$ of $I^t$, then $\Delta$ is a subcomplex of $\mathrm{Taylor}(I^t)$   and $F_\bullet$ can be structured as follows.

$$F_\bullet : 0 \rightarrow F_{\mathrm{dim}\Delta+1} \rightarrow \dots \rightarrow F_{i+1} \stackrel {\partial_{i+1}}{\longrightarrow} F_i \stackrel {\partial_i}{\longrightarrow} F_{i-1}\rightarrow  \dots \rightarrow F_0 \rightarrow R/I^t \rightarrow 0,$$
 where $F_0=R$ and $F_i$ is the free $R$-module whose free generators are $e_\tau$s for all faces $\tau$ of $\Delta$ of dimension $i-1$. If the vertices of $\tau$ is labelled by the monomials $S^{\textbf{b}_{\ell_1}},\dots , S^{\textbf{b}_{\ell_i}}$, then we sometimes denote $e_\tau$ by $e_{\ell_1, \dots , \ell_i}$, where $1\leq \ell_1< \dots < \ell_i\leq p_{m,t}$. Also for all $1\leq i \leq \mathrm{dim}\Delta +1$ and $e_{\ell_1, \dots , \ell_i}\in F_i$,
$$\partial_i(e_{\ell_1, \dots , \ell_i})=\sum_{1\leq k \leq i} (-1)^k\mu_k e_{\ell_1, \dots , \widehat{\ell_k}, \dots, \ell_i},$$
where
$$\mu_k= \frac{\mathrm{lcm}(S^{\textbf{b}_{\ell_1}}, \dots , S^{\textbf{b}_{\ell_i}})}{\mathrm{lcm}(S^{\textbf{b}_{\ell_1}}, \dots ,\widehat{S^{\textbf{b}_{\ell_k}}},\dots,  S^{\textbf{b}_{\ell_i}})}.$$
Note that for each $1\leq j \leq i$, if $\textbf{b}_{\ell_j}=(b_{1,\ell_j}, \dots , b_{m,\ell_j})$, then we have
\begin{align*}
S^{\textbf{b}_{\ell_j}}&=S_1^{b_{1,\ell_j}}\dots S_m^{b_{m,\ell_j}}=\left(\prod_{1\leq i \leq n}x_i^{a_{i,1}}\right)^{b_{1,\ell_j}}\dots \left(\prod_{1\leq i \leq n}x_i^{a_{i,m}}\right)^{b_{m,\ell_j}}\\
&=\prod_{1\leq i \leq n}x_i^{a_{i,1}b_{1,\ell_j}+\dots +a_{i,m}b_{m,\ell_j}}=\prod_{1\leq i \leq n}x_i^{c_{i,\ell_j}},
\end{align*}
where $AB=(c_{r,s})_{n\times p_{m,t}}$. Now assume that $\mathrm{Max}(AB[\ell_1, \dots , \ell_i])=(c_1, \dots , c_n)$ and
$$\mathrm{Max}(AB[\ell_1, \dots, \widehat{\ell_k}, \dots , \ell_i])=(d_{1,k}, \dots , d_{n,k}).$$
Then we have
$\mu_k=\prod_{1\leq i \leq n}x_i^{c_i-d_{i,k}}$.
By considering the degree of $e_{\ell_1, \dots , \ell_i}$ as
$$\mathrm{deg}(e_{\ell_1, \dots , \ell_i})=\mathrm{deg}(\mathrm{lcm}(S^{\textbf{b}_{\ell_1}}, \dots , S^{\textbf{b}_{\ell_i}}))
=\mathrm{deg}(x_1^{c_1}\dots x_n^{c_n})
=\mathrm{Sum}(\mathrm{Max}(AB[\ell_1, \dots , \ell_i])),$$
$F_\bullet$ is a graded free resolution of $R/I^t$ which is not necessarily minimal as mentioned before, but we may use it for computing the graded Betti numbers $\beta _{i,j}(R/I^t)$ as follows.
\begin{align}\label{eq0}
\beta _{i,j}(R/I^t)&=\mathrm{dim}_K(\mathrm{Tor}_{i}^R(R/I^t,K))_j\\
\nonumber &=\mathrm{dim}_K(H_{i}(F_\bullet \otimes _R  R/\langle x_1, \dots , x_n\rangle ))_j\\
\nonumber &=\mathrm{dim}_K(\mathrm{Ker}\overline{\partial}_{i}/\mathrm{Im}\overline{\partial}_{i+1})_j.
\end{align}
After tensoring $F_\bullet$ with $R/\langle x_1, \dots , x_n\rangle$, for each $e_\tau$ where $\tau=\{S^{\textbf{b}_{\ell_1}}, \dots , S^{\textbf{b}_{\ell_i}}\}\in \Delta$, we have
\begin{equation}\label{eq1}
\overline{\partial}_i (\overline{e_{\ell_1, \dots , \ell_i}})=\sum_{1\leq k \leq i, \mathrm{Max}(AB[\ell_1, \dots , \ell_i])=\mathrm{Max}(AB[\ell_1, \dots ,\widehat{\ell_k},\dots , \ell_i])} (-1)^k \overline{e_{\ell_1, \dots, \widehat{\ell_k}, \dots, \ell_i}},
\end{equation}
where for each $0\leq i \leq \mathrm{dim}\Delta +1$ and each member $u\in F_i$, $\overline{u}$ is the natural image of $u$ in $\overline{F_i}=F_i\otimes_R R/\langle x_1, \dots , x_n\rangle$ and $\overline{\partial}_i =\partial_i\otimes_R \mathrm{id}_{R/\langle x_1, \dots , x_n\rangle }$.

 Afterwards suppose that $\Delta$ is a simplicial complex supporting a free resolution of $R/I^t$.

\begin{remark}\label{Remarks2.2}
In view of Equation (\ref{eq1}), the following statements hold:

\begin{enumerate}
\item  $\overline{e_{\ell_1, \dots , \ell_i}}\in \mathrm{Ker}\overline{\partial}_i$ if and only if $\{S^{\textbf{b}_{\ell_1}}, \dots, S^{\textbf{b}_{\ell_i}}\}\in \Delta$ and for all $1\leq k \leq i$ we have
$$\mathrm{Max}(AB[\ell_1, \dots , \ell_i])\neq \mathrm{Max}(AB[\ell_1, \dots , \widehat{\ell_k}, \dots , \ell_i]).$$

\item If $\overline{e_{\ell_1, \dots , \ell_i}}\in \mathrm{Im}\overline{\partial}_{i+1}$, then there exists a face $\{S^{\textbf{b}_{\ell_1}}, \dots , S^{\textbf{b}_{\ell_{i+1}}}\}$ of $\Delta$ of dimension $i$ such that
$\mathrm{Max}(AB[\ell_1, \dots , \ell_{i+1}])=\mathrm{Max}(AB[\ell_1, \dots , \ell_i])$.

\item If there exists a face $\{S^{\textbf{b}_{\ell_1}}, \dots, S^{\textbf{b}_{\ell_{i+1}}}\}$ of $\Delta$ of dimension $i$ such that
$$\mathrm{Max}(AB[\ell_1, \dots , \ell_{i+1}])=\mathrm{Max}(AB[\ell_1, \dots , \ell_i]),$$
and for all $1\leq k \leq i$, we have
$\mathrm{Max}(AB[\ell_1, \dots ,\widehat{\ell_k}, \dots, \ell_{i+1}]) \neq\mathrm{Max}(AB[\ell_1, \dots , \ell_i])$,
then $\overline{e_{\ell_1, \dots , \ell_i}}\in \mathrm{Im}\overline{\partial}_{i+1}$.

\item It can be easily seen that $\mathcal{B}_{m,1}=\{\textbf{e}_1, \dots , \textbf{e}_m\}$ and so $B=I_m$ where $t=1$. Hence $AB=A=(a_{i,j})_{n\times m}$ is a matrix with entries $0$ or $1$, in which the $i$th row associates to the variable $x_i$ and the $j$th column associates to the generator $S_j$ of $I$. In fact $a_{i,j}=1$ if $x_i \mid S_j$ and else $a_{i,j}=0$.  So, in each column there exist exactly $d$ entries $1$. Also for each $i$, the $i$th row has at least one $1$, which lies in columns $j$   with $x_i\mid S_j$. Therefore\linebreak $\mathrm{Max}(AB)=(1, 1, \dots , 1)$ and
$\mathrm{Max}(AB[\ell_1, \dots , \ell_i])=(c_1, \dots, c_n)$, where $c_j=1$ if \linebreak
$x_j\in \bigcup_{1\leq r \leq i}S_{\ell_r}$,
 else $c_j=0$. This shows that for each $1\leq k \leq i$,
$$\mathrm{Max}(AB[\ell_1, \dots , \ell_i])\neq \mathrm{Max}(AB[\ell_1, \dots , \widehat{\ell_k}, \dots , \ell_i]),$$
 if and only if
 $S_{\ell_k}\nsubseteq \bigcup_{1\leq r \leq i, r\neq k} S_{\ell_r}$.

\end{enumerate}
\end{remark}
The above remarks illustrate that one can evaluate the graded Betti numbers of the edge ideal of a hypergraph by interaction of its edges as you see for instance in \cite[Theorem 6.5]{Ha+Vantuyl}, \cite[Lemma 2.2]{Katzman} and \cite[Section 3]{SF}. Note that all of these results just use Taylor resolution of $I^t$ (when $t=1$) which has easier structure as you may see above.

One can naturally generalize \cite[Lemma 3.2]{SF} and obtain the following result, which is needed in the next section.

\begin{lemma}\label{Lemma2.3} Let $i,j$ be integers. Set $$\mathcal{L}_{i,j}=\{e \ : \ e=\overline{e_{\ell_1, \dots , \ell_i}}\in \mathrm{Ker}\overline{\partial}_i\setminus \mathrm{Im}\overline{\partial}_{i+1}, \mathrm{Sum}(\mathrm{Max}(AB[\ell_1, \dots , \ell_i]))=j \}.$$

\begin{enumerate}
\item If for all faces $\{S^{\textbf{b}_{\ell_1}}, \dots , S^{\textbf{b}_{ \ell_i}}\}$ of $\Delta$ with $\mathrm{Sum}(\mathrm{Max}(AB[\ell_1, \dots , \ell_i]))=j$ we have $\mathrm{Max}(AB[\ell_1, \dots , \ell_i])\neq \mathrm{Max}(AB[\ell_1, \dots , \widehat{\ell_k}, \dots , \ell_i]),$ for all $1\leq k \leq i$, then  \linebreak$\beta_{i,j}(R/I^t)\leq |\mathcal{L}_{i,j}|$.

\item If for all faces $\{S^{\textbf{b}_{\ell_1}}, \dots , S^{\textbf{b}_{ \ell_{i+1}}}\}$ of $\Delta$  with $\mathrm{Sum}(\mathrm{Max}(AB[\ell_1, \dots , \ell_i]))=j$ and \linebreak $\mathrm{Max}(AB[\ell_1, \dots , \ell_{i+1}])=\mathrm{Max}(AB[\ell_1, \dots , \ell_i])$, we have  $$\mathrm{Max}(AB[\ell_1, \dots ,\widehat{\ell_k},\dots, \ell_{i+1}])\neq\mathrm{Max}(AB[\ell_1, \dots , \ell_i]),$$ for all $1\leq k \leq i$, then $\beta_{i,j}(R/I^t)\geq |\mathcal{L}_{i,j}|$.
\end{enumerate}
\end{lemma}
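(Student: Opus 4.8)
The key identity I would exploit is equation \eqref{eq0}, which expresses $\beta_{i,j}(R/I^t)$ as $\dim_K(\mathrm{Ker}\,\overline{\partial}_i/\mathrm{Im}\,\overline{\partial}_{i+1})_j$. Since the differentials $\overline{\partial}_\bullet$ are $\mathbb{N}$-graded (the degree of $e_{\ell_1,\dots,\ell_i}$ being $\mathrm{Sum}(\mathrm{Max}(AB[\ell_1,\dots,\ell_i]))$), the homology splits as a direct sum over internal degrees, and the graded piece in degree $j$ is computed using only those basis vectors $\overline{e_{\ell_1,\dots,\ell_i}}$ whose degree equals $j$. The set $\mathcal{L}_{i,j}$ is precisely the collection of such degree-$j$ basis vectors that lie in $\mathrm{Ker}\,\overline{\partial}_i\setminus\mathrm{Im}\,\overline{\partial}_{i+1}$. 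So the whole lemma is about comparing $\dim_K(\mathrm{Ker}\,\overline{\partial}_i/\mathrm{Im}\,\overline{\partial}_{i+1})_j$ with the cardinality of this distinguished set of basis vectors.

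For part (1), I would argue that the hypothesis forces every degree-$j$ basis vector to already lie in $\mathrm{Ker}\,\overline{\partial}_i$: by Remarks~\ref{Remarks2.2}(1), the condition $\mathrm{Max}(AB[\ell_1,\dots,\ell_i])\neq\mathrm{Max}(AB[\ell_1,\dots,\widehat{\ell_k},\dots,\ell_i])$ for all $k$ is exactly membership in $\mathrm{Ker}\,\overline{\partial}_i$. Thus in degree $j$ the kernel $(\mathrm{Ker}\,\overline{\partial}_i)_j$ is spanned by the full set of degree-$j$ basis vectors, whereas $\mathcal{L}_{i,j}$ consists of those that additionally escape the image. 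The natural images of the vectors in $\mathcal{L}_{i,j}$ span the quotient $(\mathrm{Ker}\,\overline{\partial}_i/\mathrm{Im}\,\overline{\partial}_{i+1})_j$ — indeed, any kernel element is a combination of degree-$j$ basis vectors, and modulo the image the ones lying in $\mathrm{Im}\,\overline{\partial}_{i+1}$ drop out — so the dimension of the quotient is at most $|\mathcal{L}_{i,j}|$, giving the stated upper bound on $\beta_{i,j}$.

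For part (2), I would establish the reverse inequality by showing the images of the vectors in $\mathcal{L}_{i,j}$ are $K$-linearly independent in the quotient, which forces $\dim_K(\mathrm{Ker}\,\overline{\partial}_i/\mathrm{Im}\,\overline{\partial}_{i+1})_j\geq|\mathcal{L}_{i,j}|$. The hypothesis here is precisely the criterion of Remarks~\ref{Remarks2.2}(3): whenever a codimension-one face $e_{\ell_1,\dots,\ell_i}$ (of degree $j$) is a boundary-face of some $e_{\ell_1,\dots,\ell_{i+1}}$ realizing the same $\mathrm{Max}$, the other faces of that $(i+1)$-face all have strictly smaller $\mathrm{Max}$. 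This means each such $\overline{e_{\ell_1,\dots,\ell_i}}$ that lies in the image is hit by an $\overline{\partial}_{i+1}$ with a single surviving term — so the image, restricted to degree $j$, is spanned by basis vectors, and the elements of $\mathcal{L}_{i,j}$ (being kernel vectors not in the image) map to distinct nonzero basis-vector classes in the quotient, hence are independent.

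The main obstacle is the bookkeeping in part (2): I must verify that the nonzero terms appearing in $\overline{\partial}_{i+1}(\overline{e_{\ell_1,\dots,\ell_{i+1}}})$ for a degree-$j$ face are governed correctly by \eqref{eq1}, so that the image in degree $j$ is genuinely a coordinate subspace spanned by a subset of the basis vectors. Once one confirms that under the stated hypothesis every generator of $(\mathrm{Im}\,\overline{\partial}_{i+1})_j$ reduces to a single basis vector $\pm\overline{e_{\ell_1,\dots,\widehat{\ell_k},\dots,\ell_{i+1}}}$, the linear independence of the $\mathcal{L}_{i,j}$-classes follows formally. This careful identification of which terms in \eqref{eq1} survive is where the argument must be executed with precision; the rest is a standard grading-and-spanning argument.
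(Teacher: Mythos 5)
Your proposal is correct and follows essentially the same route as the paper: part (1) is the observation that the hypothesis plus Remarks \ref{Remarks2.2}(1) gives $(\overline{F_i})_j=(\mathrm{Ker}\overline{\partial}_i)_j$, so the classes of $\mathcal{L}_{i,j}$ span the quotient, and part (2) rests on the same key point the paper uses, namely that under the hypothesis each degree-$j$ generator of $(\mathrm{Im}\overline{\partial}_{i+1})_j$ collapses via \eqref{eq1} and Remarks \ref{Remarks2.2}(3) to a single basis vector. Your coordinate-subspace phrasing of part (2) is just a repackaging of the paper's linear-dependence contradiction argument, so no substantive difference.
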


\section{Powers of the edge ideal of a hypergraph}
To start, we recall the following definitions.
\begin{definition} (See \cite{Berge} and \cite[Definitions 2.1 and Notation 2.3]{SF}.)\label{maindef}
Let $\mathcal{S}=\{S_1, \dots , S_i\}$ be a family of edges of $\mathcal{H}$. We define the type of $\mathcal{S}$ as $(i,j)$, where $i$ is the cardinality of $\mathcal{S}$ and $j=|\bigcup_{1\leq \ell \leq i} S_\ell |$.

\begin{enumerate}
\item $\mathcal{S}$ is called a \textbf{matching} in $\mathcal{H}$ if for each $\ell , \ell'$ with  $1\leq \ell < \ell' \leq i$,  $S_\ell\cap S_{\ell '}=\emptyset$.

\item $\mathcal{S}$ is called a \textbf{self matching} in $\mathcal{H}$ if for all $k$ with $1\leq k \leq i$, $S_k\nsubseteq \bigcup _{1\leq \ell \leq i,\ell\neq k} S_\ell$.

\item $\mathcal{S}$ is called a \textbf{semi-induced matching} in $\mathcal{H}$ if for each $S\in \mathcal{E}(\mathcal{H})\setminus \{S_1, \dots , S_i\}$, $S\nsubseteq \bigcup_{1\leq \ell \leq i} S_\ell$.

\item $\mathcal{S}$ is called a \textbf{self semi-induced matching} in $\mathcal{H}$ if $\mathcal{S}$ is both self matching and semi-induced matching.

\item $\mathcal{S}$ is called an \textbf{induced matching} in $\mathcal{H}$ if $\mathcal{S}$ is both matching and semi-induced matching.

\item  We use the following notions:
\begin{align*}
&m_{\mathcal{H}}=\max \{i \ : \  \textrm{there is a matching of size  } i \textrm{ in } \mathcal{H} \};\\
&(i.m)_{\mathcal{H}}=\max \{i \ : \  \textrm{there is an induced matching of size  } i \textrm{ in } \mathcal{H} \};\\
&(i.m)'_{\mathcal{H}}=\max \{j-i \ : \  \textrm{there is an induced matching of type  }(i,j) \textrm{ in } \mathcal{H}\};\\
&(s.s.i.m)_{\mathcal{H}}=\max \{i \ : \  \textrm{there is a self semi-induced matching of size  } i \textrm{ in } \mathcal{H}  \};\\
&(s.s.i.m)'_{\mathcal{H}}=\max \{j-i \ : \  \textrm{there is a self semi-induced matching of type  }(i,j) \textrm{ in } \mathcal{H}  \};\\
&(s.i.m)'_{\mathcal{H}}=\max \{j-i \ : \  \textrm{there is a semi-induced matching of type  }(i,j) \textrm{ in } \mathcal{H}  \}.\\
\end{align*}

\end{enumerate}
\end{definition}
The following observations may be helpful in the sequel.
\begin{remark}\label{Remark3.2}
Suppose that $d$ is the maximum size of edges in $\mathcal{H}$.
\begin{enumerate}
\item The following inequalities are straightforward:
$$(i.m)_{\mathcal{H}}\leq \min\{m_\mathcal{H},(s.s.i.m)_{\mathcal{H}}\} , \ \  (i.m)'_{\mathcal{H}}\leq (s.s.i.m)'_{\mathcal{H}}\leq (s.i.m)'_{\mathcal{H}}.$$
Also, if $\mathcal{S}=\{S_1, \dots , S_i\}$ is an induced matching of type $(i,j)$ in $\mathcal{H}$, then
$$j-i=|\bigcup_{1\leq \ell \leq i} S_\ell |-i=\sum_{1\leq \ell \leq i}|S_\ell |-i\leq (d-1)i\leq (d-1)(i.m)_{\mathcal{H}}.$$
Therefore $(i.m)'_{\mathcal{H}}\leq (d-1)(i.m)_{\mathcal{H}}$. Furthermore the equality holds when there exists an induced matching of maximum size containing $d$-sets. Hence in a $d$-uniform hypergraph
$$(d-1)(i.m)_{\mathcal{H}}=(i.m)'_{\mathcal{H}}\leq (s.s.i.m)'_{\mathcal{H}}\leq (s.i.m)'_{\mathcal{H}}.$$
Thus in view of \cite[Proposition 2.7]{Moradi+Khosh2017} for any $d$-uniform simple hypergraph $\mathcal{H}$ such that for each pair of distinct edges $E$ and $E'$, $E\cap E'\neq \emptyset$ implies $|E\cap E'|=d-1$ (in particular for simple graphs) we have
$$(d-1)(i.m)_{\mathcal{H}}=(i.m)'_{\mathcal{H}}= (s.s.i.m)'_{\mathcal{H}}= (s.i.m)'_{\mathcal{H}}.$$

\item The matrix $A=(a_{i,j})_{n\times m}$, defined in Notation \ref{Notation}, is the well-known incidence matrix of $\mathcal{H}$.

\item  $B=(b_{i,j})_{m\times p_{m,t}}$, defined in Notation \ref{Notation}, is a matrix in which the rows are labelled by $S_i$s and the columns are labelled by elements of $\mathcal{B}_{m,t}$. So, $AB=(c_{i,j})_{n\times p_{m,t}}$, where $c_{i,j}=\sum_{1\leq k \leq m, x_i\in S_k} b_{k,j}$.

\end{enumerate}
\end{remark}

\begin{lemma}\label{lemma3.3}
Suppose that $k\in \mathbb{N}$ and $\mathcal{S}=\{S_1, \dots , S_i\}$ is a self semi-induced matching in $\mathcal{H}$. Then for each (not necessarily distinct integers) $1\leq u_1, \dots , u_k\leq i$, $S_{u_1}\dots S_{u_k}$ is a minimal monomial generator of $I^k$.
\end{lemma}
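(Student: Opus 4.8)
\emph{Proof proposal.} The plan is to check the two conditions defining a minimal monomial generator separately. Write $m=S_{u_1}\cdots S_{u_k}$, regarded as a monomial. Membership is immediate: each $S_{u_j}$ with $1\le u_j\le i$ is a minimal generator of $I=I(\mathcal{H})$, so $m$ is a product of $k$ generators of $I$ and hence $m\in I^k$. The content of the lemma is minimality, which for a monomial ideal amounts to showing that $m/x\notin I^k$ for every variable $x$ dividing $m$. This is exactly where the two halves of the ``self semi-induced'' hypothesis will be used, one to control which edges may occur and one to force a degree contradiction.

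I would argue by contradiction. Suppose $m/x\in I^k$ for some variable $x\mid m$. Since $I^k$ is a monomial ideal whose generators are the products of $k$ minimal generators of $I$, there are edges $T_1,\dots,T_k\in\mathcal{E}(\mathcal{H})$ with $T_1\cdots T_k\mid m/x$ (as monomials). Then $\mathrm{supp}(T_1\cdots T_k)\subseteq\mathrm{supp}(m)=\bigcup_{1\le j\le k}S_{u_j}\subseteq U$, where $U:=\bigcup_{1\le\ell\le i}S_\ell$, so every $T_r$ satisfies $T_r\subseteq U$. Now the \emph{semi-induced} half enters: since $\mathcal{S}$ is a semi-induced matching, no edge of $\mathcal{H}$ other than $S_1,\dots,S_i$ is contained in $U$, whence each $T_r$ equals some $S_{v_r}$ with $v_r\in\{1,\dots,i\}$, and $T_1\cdots T_k=S_{v_1}\cdots S_{v_k}$.

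Next I would use the \emph{self} half. Because $\mathcal{S}$ is a self matching, for each $\ell$ the set $S_\ell\setminus\bigcup_{\ell'\ne\ell}S_{\ell'}$ is nonempty; choose a private vertex $p_\ell$ in it. Since a vertex $p_\ell$ lies in $S_r$ (for $r\in\{1,\dots,i\}$) precisely when $r=\ell$, its exponent in $m$ is $a_\ell:=|\{j:u_j=\ell\}|$ and its exponent in $S_{v_1}\cdots S_{v_k}$ is $b_\ell:=|\{r:v_r=\ell\}|$. From $S_{v_1}\cdots S_{v_k}\mid m/x\mid m$ we get $b_\ell\le a_\ell$ for all $\ell$, while $\sum_\ell a_\ell=\sum_\ell b_\ell=k$; hence $a_\ell=b_\ell$ for every $\ell$. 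Thus the multisets $\{u_1,\dots,u_k\}$ and $\{v_1,\dots,v_k\}$ agree, so $S_{v_1}\cdots S_{v_k}=m$. But then $m\mid m/x$, which is impossible since $\deg(m/x)=\deg m-1<\deg m$. This contradiction finishes the proof.

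The main obstacle is precisely the passage from a hypothetical factorization of $m/x$ to the conclusion that the product must reproduce $m$ itself: the semi-induced condition is what forbids any edge outside $S_1,\dots,S_i$ from appearing in such a factorization, and the self condition supplies the private vertices that make the exponent count rigid enough to pin down the multiset of indices. I expect the only delicate point to be the exponent bookkeeping for the $p_\ell$ (in particular that $p_\ell\in S_{u_j}$ forces $u_j=\ell$ because $u_j\in\{1,\dots,i\}$); once this is stated carefully, the inequalities $b_\ell\le a_\ell$ together with the common sum $k$ give equality of the two multisets with no further work.
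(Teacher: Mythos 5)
Your proof is correct. The paper also argues by contradiction from a competing factorization, but executes the key step differently: it takes a product $S_{r_1}\cdots S_{r_k}$ of $k$ edges dividing $S_{u_1}\cdots S_{u_k}$ with a different multiset of indices, cancels the factors common to both multisets, and is left with a single edge $S_r$, with $r$ not among the surviving indices $u_{j_1},\dots,u_{j_t}$, satisfying $S_r\subseteq S_{u_{j_1}}\cup\dots\cup S_{u_{j_t}}$ --- which contradicts the semi-induced condition when $S_r\notin\mathcal{S}$ and the self condition when $S_r\in\mathcal{S}$. You instead start from the criterion that $S_{u_1}\cdots S_{u_k}$ divided by a variable must lie outside $I^k$, use the semi-induced condition to force every edge of a competing factorization to belong to $\mathcal{S}$, and then use the private vertices supplied by the self condition to count exponents and pin down the multiset of indices, ending in a degree contradiction. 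The paper's route is shorter but leans on a somewhat informally stated ``omit equal terms from both sides'' cancellation; yours is longer but every step is explicit, and it cleanly separates the roles of the two halves of the hypothesis (your private-vertex device is in fact the same mechanism the paper uses later in the proof of Theorem \ref{3.7}). Both arguments are elementary and of essentially the same strength.
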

\begin{proof}
Assume on the contrary that $S_{u_1}\dots S_{u_k}$ is not a minimal generator of $I^k$ for some \linebreak $1\leq u_1, \dots , u_k\leq i$. Then there exist (not necessarily distinct) elements $r_1, \dots , r_k\in \{1, \dots , m\}$ such that $S_{r_1}\dots S_{r_k} \mid S_{u_1}\dots S_{u_k}$ and $\{r_1, \dots , r_k\}\neq \{u_1, \dots , u_k\}$. By omitting equal terms from both sides of the division, we have $S_r \mid S_{u_{j_1}}\dots S_{u_{j_s}}$ for some\linebreak $r\in \{r_1, \dots , r_k\}\setminus \{u_{j_1}, \dots , u_{j_s}\}$ where $\{u_{j_1}, \dots , u_{j_s}\}\subseteq \{u_1, \dots , u_k\}$. This means\linebreak $S_r\subseteq S_{u_{j_1}}\cup \dots \cup S_{u_{j_s}}$. This contradicts to $\mathcal{S}$ is a self semi-induced matching.
\end{proof}

Now we need  some definitions from \cite{Faridi1} and \cite{Faridi2}. In \cite{Faridi2} a simplicial complex $\mathbb{L}_m^t$ is defined and it is shown that it supports a free resolution of $I^t$. In \cite{Faridi1}, the case $t=2$ is investigated individually. So, for our next results of this section, we refer the reader to Definitions 3.1 and 3.4 in \cite{Faridi1} and  Definitions 4.2 and 5.1 and Proposition 4.3 in \cite{Faridi2}. Hereinafter we apply the notions in Section 2 for the simplicial complex $\mathbb{L}_m^t$.

\begin{proposition}\label{3.6}
Suppose that $\mathcal{H}$ is a $d$-uniform hypergraph,  $i>1, j=2di$ and
$$\mathcal{L}_{i,j}=\{e \ : \ e=\overline{e_{\ell_1, \dots , \ell_i}}\in \mathrm{Ker}\overline{\partial}_i\setminus \mathrm{Im}\overline{\partial}_{i+1}, \mathrm{Sum}(\mathrm{Max}(AB[\ell_1, \dots , \ell_i]))=j \}.$$
Then $\beta_{i,j}(R/I^2)=|\mathcal{L}_{i,j}|$.
\end{proposition}
\begin{proof}
In view of Lemma \ref{Lemma2.3}, it is enough to show that the assumptions of Parts 1 and 2 in Lemma \ref{Lemma2.3} hold, when $t=2$, $i>1$ and $j=2di$. To this end, suppose that \linebreak $\tau=\{S^{\textbf{b}_{\ell_1}}, \dots , S^{\textbf{b}_{ \ell_i}}\}$ is a face of $\mathbb{L}^2(I)$. By means of \cite[Definition 3.1]{Faridi1}, one may assume that $\tau$ has one of the following forms:
$$\tau=\{S_1^2, S_1S_2, \dots , S_1S_i\} \text{ or } \tau=\{S_{\ell_1}S_{\ell'_1}, \dots , S_{\ell_i}S_{\ell'_i}\},$$
when $\ell_k< \ell'_k$ for all $1\leq k \leq i$. Now in each case we investigate the degree of $e_{\ell_1, \dots , \ell_i}$. Suppose that  $\mathrm{Max}(AB[\ell_1, \dots , \ell_i])=(c_k)$. In view of Remark \ref{Remark3.2}(3), in the first case, $c_k=2$ if $x_k\in S_1$, $c_k=1$ if $x_k\in \bigcup_{2\leq \ell \leq i} S_\ell\setminus S_1$ and else $c_k=0$. So
\begin{align*}
\mathrm{deg}( e_{\ell_1, \dots ,  \ell_i})&=\sum c_k\\
&=2d+|\bigcup_{2\leq \ell \leq i} S_\ell\setminus S_1|\\
&\leq 2d +(i-1)d\\
&=(i+1)d<2di.
\end{align*}
Hence $\tau$ should be in the form of the second one. Now, in the second case $c_k=2$  if \linebreak $x_k\in \bigcup_{1\leq r \leq i}(S_{\ell_r}\cap S_{\ell'_r})$, $c_k=1$ if $x_k\in \bigcup_{1\leq r \leq i}((S_{\ell_r}\cup S_{\ell'_r})\setminus (S_{\ell_r}\cap S_{\ell'_r}))$ and else $c_k=0$. Hence
$\mathrm{deg}( e_{\ell_1, \dots ,\ell_i})=\sum c_k\leq 2di,$
and the equality holds when $(S_{\ell_k}\cup S_{\ell'_k})$s are disjoint for all $1\leq k \leq i$. So,
for all $1\leq k \leq i$,  the rows associated to the vertices in $S_{\ell_k}\cup S_{\ell'_k}$ will be zero in $\mathrm{Max}(AB[\ell_1, \dots , \widehat{\ell_k}, \dots, \ell_i])$, while they have entries 1 or 2 in $\mathrm{Max}(AB[\ell_1, \dots, \ell_i])$. Thus $\mathrm{Max}(AB[\ell_1, \dots , \widehat{\ell_k}, \dots, \ell_i])\neq \mathrm{Max}(AB[\ell_1, \dots , \ell_i])$,
for all $1\leq k \leq i$. This shows that the assumption of Lemma \ref{Lemma2.3}(1) holds and so $\beta_{i,j}(R/I^2)\leq|\mathcal{L}_{i,j}|$.

Now suppose that $\tau=\{S^{\textbf{b}_{\ell_1}}, \dots , S^{\textbf{b}_{ \ell_{i+1}}}\}$ is a face of $\mathbb{L}^2(I)$ with
$$\mathrm{Sum}(\mathrm{Max}(AB[\ell_1, \dots , \ell_i]))=2di, \mathrm{Max}(AB[\ell_1, \dots, \ell_{i+1}])= \mathrm{Max}(AB[\ell_1, \dots , \ell_i]).$$ By means of the above explanation, we should have
$\tau=\{S_{\ell_1}S_{\ell'_1}, \dots , S_{\ell_{i+1}}S_{\ell'_{i+1}}\},$
when $\ell_k< \ell'_k$ for all $1\leq k \leq i+1$ and $(S_{\ell_k}\cup S_{\ell'_k})$s are disjoint for all $1\leq k \leq i$ and \linebreak $S_{\ell_{i+1}}\cup S_{\ell'_{i+1}}\subseteq \bigcup_{1\leq r \leq i}(S_{\ell_r}\cup S_{\ell'_r})$. Suppose that $1\leq k\leq i$. Then in view of the structure of $\mathbb{L}^2(I)$, $S_{\ell_k}S_{\ell'_k}\nmid S_{\ell_{i+1}}S_{\ell'_{i+1}}$. So there exists a variable $x_{r_k}$ such that

$\bullet$ either $x_{r_k}\in S_{\ell_k}\cup S_{\ell'_k}$  but $x_{r_k}\not\in S_{\ell_{i+1}}\cup S_{\ell'_{i+1}}$,

$\bullet$ or $x_{r_k}\in S_{\ell_k}\cap S_{\ell'_k}$  but $x_{r_k}\not\in S_{\ell_{i+1}}\cap S_{\ell'_{i+1}}$.

Hence

$\bullet$ either the $r_k$th row in $\mathrm{Max}(AB[\ell_1, \dots, \ell_i])$ is one or two, while the $r_k$th row in
$$\mathrm{Max}(AB[\ell_1, \dots, \widehat{\ell_k}, \dots, \ell_{i+1}]),$$
 is zero,

$\bullet$ or the $r_k$th row in $\mathrm{Max}(AB[\ell_1, \dots, \ell_i])$ is two, while the $r_k$th row in
$$\mathrm{Max}(AB[\ell_1, \dots, \widehat{\ell_k}, \dots, \ell_{i+1}]),$$ is one.

Therefore in each case we have
$$\mathrm{Max}(AB[\ell_1, \dots, \widehat{\ell_k}, \dots, \ell_{i+1}]) \neq \mathrm{Max}(AB[\ell_1, \dots, \ell_i]).$$
Thus by Lemma \ref{Lemma2.3}(2), $\beta_{i,j}(R/I^2)\geq|\mathcal{L}_{i,j}|$. These complete the proof.
\end{proof}

One can use Proposition \ref{3.6} for vanishing of the special graded Betti numbers of the second power of the edge ideal as follows.

\begin{corollary}\label{Cor3.7}
Suppose that $\mathcal{H}$ is a $d$-uniform hypergraph and $i>1$ such that \linebreak $\beta_{i,2di}(R/I^2)\neq 0$. Then $\mathcal{H}$ should have $2^i$ matchings of size $i$.
\end{corollary}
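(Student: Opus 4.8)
The plan is to read off the combinatorial content directly from Theorem~\ref{3.6} and its proof, with no new resolution-theoretic input. First I would invoke Theorem~\ref{3.6}, which gives $\beta_{i,2di}(R/I^2)=|\mathcal{L}_{i,2di}|$; since by hypothesis this Betti number is nonzero, $\mathcal{L}_{i,2di}$ is nonempty, so there is at least one basis element $\overline{e_{\ell_1,\dots,\ell_i}}\in \mathrm{Ker}\overline{\partial}_i\setminus \mathrm{Im}\overline{\partial}_{i+1}$ with $\mathrm{Sum}(\mathrm{Max}(AB[\ell_1,\dots,\ell_i]))=2di$, arising from a face $\tau$ of $\mathbb{L}^2(I)$ of dimension $i-1$.

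Next I would pin down the shape of $\tau$ using the case analysis already carried out in the proof of Theorem~\ref{3.6}. The first possible form $\{S_1^2,S_1S_2,\dots,S_1S_i\}$ is ruled out because its degree is at most $(i+1)d$, which is strictly less than $2di$ for $i>1$. Hence $\tau$ must be of the second form $\{S_{\ell_1}S_{\ell'_1},\dots,S_{\ell_i}S_{\ell'_i}\}$ with $\ell_k<\ell'_k$. Writing $U_r=S_{\ell_r}\cup S_{\ell'_r}$, the bound $\deg(e_{\ell_1,\dots,\ell_i})\le 2di$ comes from replacing a column-wise sum by a column-wise maximum, so the equality $\deg=2di$ forces the $U_r$ to be pairwise disjoint. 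This disjointness is the crucial structural fact I would extract.

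From here the argument is purely combinatorial. The $2i$ edges $S_{\ell_1},S_{\ell'_1},\dots,S_{\ell_i},S_{\ell'_i}$ are pairwise distinct: within a pair because $\ell_r<\ell'_r$, and across pairs because the $U_r$ are nonempty and disjoint. Choosing, for each $1\le r\le i$, one of the two edges $S_{\ell_r}$ or $S_{\ell'_r}$ produces $i$ edges lying in distinct disjoint sets $U_r$, hence pairwise disjoint; that is, every such choice is a matching of size $i$ in $\mathcal{H}$. There are $2^i$ choices, and I would verify they yield $2^i$ distinct matchings: two choices differing at coordinate $r$ produce edge-sets that disagree on which edge of $U_r$ they contain, and no edge selected from another pair can lie inside $U_r$, so the two matchings are genuinely different as subsets of $\mathcal{E}(\mathcal{H})$.

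The main obstacle is not any hard computation but making the bookkeeping airtight: one must be certain that the face supplied by $\mathcal{L}_{i,2di}$ really is of the second form with pairwise disjoint unions (exactly what the proof of Theorem~\ref{3.6} establishes) and that the $2^i$ selections are pairwise distinct. Both points hinge on the same disjointness of the $U_r$, so once that is secured the count $2^i$ is immediate; in particular I would not expect to need any further minimality argument about the generators $S_{\ell_r}S_{\ell'_r}$ beyond what Theorem~\ref{3.6} already guarantees.
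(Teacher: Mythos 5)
Your argument is correct and follows essentially the same route as the paper: invoke Theorem~\ref{3.6} to get a nonzero element of $\mathcal{L}_{i,2di}$, use the case analysis in that theorem's proof to force the face into the form $\{S_{\ell_1}S_{\ell'_1},\dots,S_{\ell_i}S_{\ell'_i}\}$ with the unions $S_{\ell_r}\cup S_{\ell'_r}$ pairwise disjoint, and then select one edge from each pair to obtain $2^i$ matchings of size $i$. The only difference is that you spell out the distinctness of the $2^i$ resulting matchings, which the paper simply asserts as clear.
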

\begin{proof}
If $\beta_{i,2di}(R/I^2)\neq 0$, then in view of Proposition \ref{3.6}, $\mathcal{L}_{i,2di}\neq \emptyset$. So, there exists an element $e_\tau$ in $\mathcal{L}_{i,2di}$ of degree $2di$. As one can see in the proof of Proposition \ref{3.6}, $\tau$ should be in the form of $\{S_{\ell_1}S_{\ell'_1}, \dots , S_{\ell_i}S_{\ell'_i}\}$, when $\ell_k< \ell'_k$ and $(S_{\ell_k}\cup S_{\ell'_k})$s are disjoint for all $1\leq k \leq i$. Hence a set consisting precisely one edge from each set $\{S_{\ell_k}, S_{\ell'_k}\}$ for $1\leq k \leq i$, will be a matching of size $i$ in $\mathcal{H}$. Clearly, there exist $2^i$ such sets.
\end{proof}

Now we prove Theorem \ref{3.7}.
\begin{proof}[Proof of Theorem \ref{3.7}]
\begin{enumerate}
\item Suppose that $\mathcal{S}=\{S_1, \dots, S_i\}$ is a self semi-induced matching of type $(i,j)$ in $\mathcal{H}$. Then for each $1\leq \ell \leq i$, set $\tau_\ell=\{S_\ell^{t-1}S_j : 1\leq j \leq i\}$. (Note that if $t=1$, then $\tau_1 = \dots = \tau_i$.) By means of Lemma \ref{lemma3.3}, for each $1\leq \ell \leq i$, $\tau_\ell$ is a face of $\mathbb{L}^t(I)$.  We show that
$$\overline{e_{\tau_\ell}}\in(\mathrm{Ker}\overline{\partial}_{i}\setminus \mathrm{Im}\overline{\partial}_{i+1})_{|S_\ell|(t-1)+j}.$$
Firstly note that
\begin{align*}
\mathrm{deg}(e_{\tau_\ell})&=\mathrm{deg}(S_\ell^{t-1}\mathrm{lcm}(S_1, \dots , S_i))\\
&=\mathrm{deg}(S_\ell^{t-1})+\mathrm{deg}(\mathrm{lcm}(S_1, \dots, S_i))\\
&=|S_\ell|(t-1)+|\bigcup _{1\leq k \leq i} S_k |=|S_\ell|(t-1)+j.
\end{align*}
Without loss of generality we may assume that $\ell=1$ and for $1\leq r \leq i$, $\textbf{b}_r\in \mathcal{B}_{m,t}$ is the vector associated to the $r$th element of $\tau_1$. Now, in view of Remark \ref{Remarks2.2}(1), to prove $\overline{e_{\tau_1}}\in \mathrm{Ker}\overline{\partial}_{i}$ it is enough to show that for all $1\leq k \leq i$,
$$\mathrm{Max}(AB[1, \dots, i])\neq \mathrm{Max}(AB[1, \dots , \widehat{k}, \dots , i]).$$
Suppose that $2\leq k \leq i$ (resp. $k=1$). Since $\mathcal{S}$ is a self-matching, there is a vertex $x_{u_k}\in S_k\setminus \bigcup_{1\leq \ell \leq i, \ell \neq k}S_\ell$. Now, in view of Remark \ref{Remark3.2}(3), if $AB[1, \dots , i]=(c_{r,r'})_{n\times i}$, then $c_{u_k,k}=1$ (resp. $c_{\ell_k,k}=t$) and the other entries in the $u_k$th row are zero (resp. $t-1$). So the $u_k$th component in $\mathrm{Max}(AB[1, \dots , i])$ is one (resp. $t$), while the $u_k$th component in $\mathrm{Max}(AB[1, \dots , \widehat{k}, \dots , i])$ is zero (resp. $t-1$).

To prove the last part of our claim, suppose in contrary that $\overline{e_{1, \dots, i}}\in \mathrm{Im}\overline{\partial}_{i+1}$. Then by Remark \ref{Remarks2.2}(2), there should exist a face $\tau '$ containing $\tau_1$ such that
$$\mathrm{Max}(AB[1, \dots , i])=\mathrm{Max}(AB[1, \dots, i+1]).$$
 Note that since $\tau '$ is a face containing $S_1^t$, we should have
$$\tau'=\{S_1^t, S_1^{t-1}S_2, \dots , S_1^{t-1}S_i, S_1^{t-1}S_{i+1}\},$$
 by renumbering the edges if it is required. Hence $e_{\tau'}=e_{1, \dots, i+1}$, where $\textbf{b}_{i+1}$ is a vector whose the first component is $t-1$ and the $(i+1)$th component is one and other components are zero. Now, since $\mathcal{S}$ is a semi-induced matching, \linebreak $S_{i+1}\nsubseteq \bigcup_{1\leq \ell \leq i}
S_\ell$. Thus similar argument to above paragraph ensures the contradiction  $\mathrm{Max}(AB[1, \dots , i])\neq \mathrm{Max}(AB[1, \dots, i+1])$.

\item The proof of Part 1 implies 2.

\item immediately follows from Remark \ref{Remark3.2}(1) and Part 1.
\end{enumerate}
\end{proof}
In view of Remark \ref{Remark3.2}(1) and \cite[Example 5.2]{BBH}, the equalities in Theorem \ref{3.7}(3) holds for many classes of hypergraphs and there exists examples illustrating the strictness of the bound. 

The following corollary, which is an immediate consequence of \cite[Theorem 3.6]{Moradi+Khosh2017}, Theorem \ref{3.7}(3) and Remark \ref{Remark3.2}(1), can regain Theorem 2.4 in \cite{FS}. Recall that a \textbf{chain} in $\mathcal{H}$ is a sequence $v_0, S_1, v_1, \dots , S_k, v_k$ of vertices and edges in $\mathcal{H}$, where $v_i \in S_i$ for $1\leq i \leq k, v_i \in S_{i+1}$ for
$0 \leq i \leq k-1$. When $k>1$, we say that $\mathcal{H}$ is \textbf{$\mathcal{C}_k$-free} if it doesn't contain any chain $v_0, S_1, v_1, \dots , S_k, v_0$ with $k>1$ and distinct $v_i$s and $E_i$s.
\begin{corollary}\label{Cor3.8}
Let $\mathcal{H}$ be a $(C_2,C_5)$-free vertex decomposable hypergraph. Then
$$(s.s.i.m)'_\mathcal{H}\leq \mathrm{reg}(R/I) \leq (s.i.m)'_\mathcal{H}.$$
So for any $C_5$-free vertex decomposable
graph we regain $$\mathrm{reg}(R/I(G)) = (i.m)_G.$$
\end{corollary}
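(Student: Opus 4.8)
The plan is to read off the two displayed inequalities from two essentially disjoint inputs and then to collapse them in the graph case. For the lower bound $(s.s.i.m)'_{\mathcal{H}}\leq \mathrm{reg}(R/I(\mathcal{H}))$ I would specialize Theorem \ref{3.7} to $t=1$. Fix a self semi-induced matching $\mathcal{S}=\{S_1,\dots,S_i\}$ of type $(i,j)$ realizing the maximum, so that $(s.s.i.m)'_{\mathcal{H}}=j-i$. Part 1 of Theorem \ref{3.7} with $t=1$ gives $\beta_{i,j}(R/I(\mathcal{H}))\neq 0$, and hence $\mathrm{reg}(R/I(\mathcal{H}))\geq j-i=(s.s.i.m)'_{\mathcal{H}}$. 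I emphasize that this step needs no uniformity, since Part 1 is stated for arbitrary hypergraphs; when $\mathcal{H}$ happens to be $d$-uniform one may instead quote Part 3 directly.

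For the upper bound $\mathrm{reg}(R/I(\mathcal{H}))\leq (s.i.m)'_{\mathcal{H}}$ the entire content is supplied by \cite[Theorem 3.6]{Moradi+Khosh2017}, whose hypotheses are exactly that $\mathcal{H}$ be $(C_2,C_5)$-free and vertex decomposable, i.e.\ precisely the standing assumptions of the corollary. Thus the only thing to check here is that the combinatorial hypotheses coincide, after which the bound is a direct quotation. Concatenating the two bounds yields the asserted chain $(s.s.i.m)'_{\mathcal{H}}\leq \mathrm{reg}(R/I(\mathcal{H}))\leq (s.i.m)'_{\mathcal{H}}$.

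For the statement about graphs I would first record that a simple graph $G$ is automatically $C_2$-free: a chain $v_0,E_1,v_1,E_2,v_0$ with distinct edges would force both $E_1$ and $E_2$ to contain the pair $\{v_0,v_1\}$, hence $E_1=E_2$ since both are $2$-sets, a contradiction. So a $C_5$-free vertex decomposable graph meets the hypotheses of the corollary with $d=2$. Now I would invoke Remarks \ref{Remark3.2}(1): since $G$ is $2$-uniform and, for simple graphs, $E\cap E'\neq\emptyset$ forces $|E\cap E'|=1=d-1$, all the matching invariants collapse to a single value, namely $(i.m)_G=(i.m)'_G=(s.s.i.m)'_G=(s.i.m)'_G$. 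Feeding the two equal outer terms into the chain above squeezes the regularity and gives $\mathrm{reg}(R/I(G))=(i.m)_G$, which is Theorem 2.4 of \cite{FS}.

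Because the argument is a bookkeeping assembly of the three cited results, there is no genuinely hard analytic step. The main obstacle, such as it is, lies wholly in the last paragraph: one must confirm that Remarks \ref{Remark3.2}(1) applies (the $2$-uniform, $|E\cap E'|=d-1$ condition) and that $C_2$-freeness is automatic for graphs, so that the two outer invariants of the chain genuinely coincide and force the final equality.
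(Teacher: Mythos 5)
Your proposal is correct and matches the paper's own (unwritten) argument, which simply declares the corollary an immediate consequence of \cite[Theorem 3.6]{Moradi+Khosh2017}, Theorem \ref{3.7} and Remarks \ref{Remark3.2}(1). Your one refinement --- invoking Part 1 of Theorem \ref{3.7} at $t=1$ rather than Part 3, so as not to assume $d$-uniformity for the lower bound --- is a sensible tightening, since the corollary is stated for arbitrary $(C_2,C_5)$-free vertex decomposable hypergraphs while Part 3 is stated only for uniform ones.
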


Note that the following example illustrates that the equality in Corollary \ref{Cor3.8} doesn't always hold.
\begin{example}\label{exam1}
Assume that  $\mathcal{H}$ is a $3$-uniform $(C_2,C_5)$-free vertex decomposable simple hypergraph with vertex set $\{x_1, \dots , x_9\}$ and edge set
 $$\{ \{x_1, x_2, x_3\}, \{x_4, x_5, x_6\}, \{x_7, x_8, x_9\}, \{x_1, x_4, x_7\}\}.$$
Then one can see that
$$(s.s.i.m)'_\mathcal{H}=4\leq \mathrm{reg}(R/I)=(s.i.m)'_\mathcal{H}=5.$$
\end{example}

The next example gives a class of hypergraphs achieving the equality in Theorem \ref{3.7}(3). This example demonstrates the advantage of our lower bound over those previously found.

\begin{example}\label{exam2}
Assume that $\mathcal{H}$ is a $d$-uniform simple hypergraph with 
$$V(\mathcal{H})=\{x_1, \dots , x_k\} \cup \bigcup_{1\leq i \leq s} \bigcup_{1\leq j \leq d-k}\{x_{i,j}\},$$
and edge set 
$$\mathcal{E}(\mathcal{H})=\{E_i=\{x_1, \dots , x_k, x_{i,1}, \dots, x_{i,d-k}\} : 1\leq i\leq s\},$$
 where $k$ is an integer with $1\leq k\leq d-1$.
One can check that $(s.s.i.m)'_\mathcal{H}=s(d-k-1)+k$, since $\mathcal{E}(\mathcal{H})$ forms the desired self semi induced matching in $\mathcal{H}$. Also if we set $u=x_1\dots x_k$ and $u_i=\prod_{1\leq j \leq d-k}x_{i,j}$ for every integer $i$ with $1\leq i \leq s$, then $I=u\langle u_1, \dots , u_s \rangle$. Thus since $\langle u_1, \dots, u_s\rangle $ is a complete intersection, in view of \cite[Lemma 4.4]{B+H+T}, for each $t\in \mathbb{N}$ we have
\begin{align*}
\mathrm{reg}(R/I^t)&=\mathrm{reg}(I^t)-1\\
&=\mathrm{reg}(u^t \langle u_1, \dots , u_s \rangle ^t)-1\\
&=\deg (u^t)+ \mathrm{reg}(\langle u_1, \dots , u_s \rangle ^t)-1\\
&= kt+(d-k)t+(d-k-1)(s-1)-1\\
&=d(t-1)+s(d-k-1)+k\\
&=d(t-1)+(s.s.i.m)'_\mathcal{H}.
\end{align*}
Also this is obvious that for large values of $s$, this bound is reasonably greater than the bound $d(t-1)+(d-1)(i.m)_\mathcal{H}$ given in \cite[Corollary 3.8]{BCKMS}.
\end{example}

Although there is no general upper bound known for regularity of powers of the edge ideals of arbitrary hypergraphs, but there is a good one for graphs in terms of matching number (\cite{B+B+H}). So using Macaulay 2 (\cite{Macaulay2}) for some examples of hypergraphs and some positive integers $t$ and also thanks to the general upper bound for graphs in \cite[Theorem 3.4]{B+B+H} yields to the following question.
\begin{question} \label{Question2}
Suppose that $t\in \mathbb{N}$. Then
\begin{itemize}
\item[1.] Is the following inequality holds for each $d$-uniform hypergraph $\mathcal{H}$ in which every two intersecting edges have exactly $d-1$ common vertices?
$$\mathrm{reg}(R/I^t)\leq d(t-1)+ m_{\mathcal{H}}(d-1).$$
\item[2.]  For which hypergraphs we have $\mathrm{reg}(R/I(\mathcal{H})^t)= d(t-1)+(s.s.i.m)'_{\mathcal{H}}$?
\end{itemize}
\end{question}

Note that the condition of Question \ref{Question2}(1) is necessary. For instance by example \ref{exam2} for $3$-uniform hypergraph $\mathcal{H}$ with $V(\mathcal{H})=\{x_1,\dots,x_5\}$ and $\mathcal{E}(\mathcal{H})=\{\{x_1, x_2, x_3\}, \{x_3, x_4, x_5\}\}$ we see that $\mathrm{reg}(R/I(H))=3$,
$\mathrm{reg}(R/I(\mathcal{H})^3)=9$ and $m_{\mathcal{H}}=1$ and so $d(t-1)+m_{\mathcal{H}}(d-1)$ can not be an upper bound even for the first power and $d$-uniform  well-known tree hypergraphs.

\textbf{Acknowledgement.}
The author is deeply grateful to Sara Faridi for drawing her attention to \cite{Faridi1} and \cite{Faridi2}. Also she would like to thank the referee for his/her valuable comments which substantially improved the quality of the paper.

\end{document}